 \newcommand{\m}{\mathfrak{m} }
 \newcommand{\p}{\operatorname{P}}
 \newcommand{\indeg}{\operatorname{indeg}}
 \newcommand{\gr }{\operatorname{gr}}
 \newcommand{\ann}{\operatorname{ann}}
 \newcommand{\lin}{\operatorname{lin}}
  \newcommand{\reg}{\operatorname{reg}}
 \newcommand{\Tor}{\operatorname{Tor}}
\theoremstyle{plain}
\newtheorem{thm}{Theorem}[section]
\newtheorem{cor}[thm]{Corollary}
\newtheorem{prop}[thm]{Proposition}
\newtheorem{rem}[thm]{Remark}
\newtheorem{defin-rem}[thm]{Definition and Remark}
\newtheorem{defen}[thm]{Definition}
\newtheorem{con}[thm]{Construction}
\title[ ] {on the regularity and Koszulness of modules over local rings}
\author{Rasoul Ahangari Maleki}
\address{Faculty of Mathematical Sciences and Computer, Kharazmi  University, Tehran}\email{rasoulahangari@gmail.com\\
 std\_ahangari@khu.ac.ir }
\numberwithin{equation}{thm}
\begin{document}
\maketitle
\begin{abstract}
Koszul modules  over Noetherian local rings $R$ were introduced by Herzog and Iyengar and   they possess  good homological properties, for instance their Poincare' series is rational.  It is an interesting problem to characterize classes of Koszul modules.  Following the idea traced by Avramov, Iyengar and Sega,  we take advantage of the existence of special filtration  on $R$ for proving that large classes of  $R$-modules over Koszul rings are Koszul modules. By using this tool we reprove and extend some results obtained by Fitzgerald.
\end{abstract}
\thispagestyle{empty}

\section*{Introduction}
Consider a Noetherian local ring (or a standard graded $k$-algebra) $R$ with maximal (or maximal homogeneous) ideal $\mathfrak{m}$ and residue field $k$. Given a finitely generated $R$-module
$M$ and $\mathbf{F}$ its minimal  free resolution, denote  by $\lin^{R}(\mathbf{F})$ the associated graded complex $\mathbf{F}$
arising from the standard $\mathfrak{m}$-adic filtration; the details are given in Section $1$. This construction has been investigated by Herzog and Iyengar (\cite{H-i}).\\
 \indent Following \cite{H-i}, a finitely generated $R$-module $M$ is Koszul if $\textsc{H}_{\emph{i}}(\lin^{R}(\mathbf{F}))=0$
for all $i>0$. Such modules are characterized by the property that their associated graded module $\gr_{\mathfrak{m}}(M)$ has a linear resolution
over the associated graded ring $\gr_{\m}(R)$.
The ring $R$ is Koszul if $k$ is a Koszul module.\\
\indent From certain point of views, Koszul algebras behave homologically as polynomial rings.
For instance, if $R$ is a graded Koszul algebra, then  every finitely generated graded module over $R$ has regularity bounded by its regularity over a polynomial ring of which the Koszul algebra is a homomorphic image.\\
\indent There are some techniques to prove that a graded algebra $R$ is Koszul. One of them is the existence of a special filtration
called Koszul filtration  which is introduced by Conca, Trung and Valla  (\cite{CTV}). Roughly speaking, a Koszul filtration
 for an algebra $R$ is a family of ideals of $R$ generated by linear forms such that any ideal of the family can be filtered in such a way that all the successive colon ideals belong to the family. If an algebra has a Koszul filtration, then it is necessarily Koszul (\cite[Theorem 1.2]{CTV}).
  Conca, Valla and Rossi in \cite{CRV} defined a special Koszul filtration, called  Gr\"{o}bner flag, which is just a Koszul filtration supported on a single complete flag of linear forms.\\
 \indent The goal of this paper is to take advantage of the existence of Koszul filtration and Gr\"{o}bner flag
  to determine some classes of Koszul modules over Koszul rings. In Section $2$, we will be concerned with three classes of rings:

 \begin{enumerate}
 \item[$(i)$]a local ring $(R,\m)$ which has an element $0\neq x\in \m$ with $x^{2}=0$ and $x\m=\m^{2}$;
 \item[$(ii)$] the class of Cohen-Macaulay local rings with minimal multiplicity;
 \item[$(iii)$] a local ring $(R,\m)$ with the property that $\ann(x)\m=\m^{2}$ for all $x\in \m\setminus \m^{2}$.
 \end{enumerate}
   We investigate the Koszul property and we find some upper bounds for the regularity of modules over these classes of local rings.\\
  \indent Theorem \ref{reg} states that if $0\neq x\in \mathfrak{m}$ satisfies $x^{2}=0$ and $x\mathfrak{m}=\mathfrak{m}^{2}$ (such an element is called  a Conca generator of $\mathfrak{m}$), then modules  annihilated by $x$ are Koszul. This result was already proved in
  \cite[Theorem 3.2]{AIS}, here we give an alternative proof as an application of the existence of Gr\"{o}bner flag. \\
  \indent It is known that a Cohen-Macaulay local ring $R$ of minimal multiplicity is Koszul (see  \cite[Theorem 5.2]{H-V-W}). We show, in Theorem \ref{min-mult}, that every finitely generated  $R$-module annihilated by a minimal reduction of $\mathfrak{m}$, is Koszul.\\
  \indent Let $(R,\mathfrak{m})$ be a local ring and $\ann(x)\mathfrak{m}=\mathfrak{m}^{2}$
  for every $x\in \mathfrak{m}\setminus \mathfrak{m}^{2}$. Then, by a result of Fitzgerald (\cite[Theorem 3.6]{F}), the ring $R$ is a Koszul ring. Here, we extend and improve this result. More precisely, we show that the associated graded ring of $R$ has a Koszul filtration. This gives us an alternative proof for the Koszulness of $R$. Moreover, for a finitely generated $R$-module $M$ we establish the following:

 \begin{enumerate}
 \item[$(i)$] $M$ is Koszul if there exists $x\in \m\setminus \m^{2}$ such that $\ann(x)M$=0;
 \item[$(ii)$] if $x_{1},\cdots,x_{s}$ are in $\m\setminus \m^{2}$, then $(x_{1},\cdots,x_{s})M$ is Koszul;
 \item[$(iii)$] $\reg _{R}(M)\leq 1 $ .
 \end{enumerate}

\smallskip

\indent Throughout  the paper, all rings are commutative Noetherian,
all modules are finitely generated
and $k$ denotes a field. Also, we will use
$\mathbb{Z}$ (respectively $\mathbb{N}_{0}$) to denote
the set of integers (respectively non-negative integers).

\smallskip
\section{\bf Notations and Generalities}

\indent In this section, we are going to fix some notations and generalities.\\
\indent  Let $R=\bigoplus_{i\in \mathbb{N}_{0}}R_{i}$
 be a standard graded $k$-algebra, that is  $R_{0}=k$ and $R$ is generated (as a $k$-algebra) by elements of
degree $1$.\\

\begin{rem}\label{nota}
 Let $M=\bigoplus_{i\in \mathbb{Z}} M_{i}$  be a graded $R$-module.
 \begin{enumerate}
 \item[$(1)$] For each $d\in\mathbb{Z}$ we denote by
$M(d)$ the graded $R$-module with $M(d)_{p}=M_{d+p}$, for all $p\in\mathbb{Z}.$
Denote by $\mathfrak{m}$ the maximal homogeneous ideal of $R$.
Then we may consider $k$ as a graded $R$-module via the identification $k=R/\m$.\\
\item[$(2)$] The Hilbert series of $M$ is defined by
$\textsc{H}_{M}(z)=\sum_{i\in\mathbb{Z}} \dim_{k}(M_{i})z^{i}\in \mathbf{Q}[|z|][z^{-1}]$.
\item[$(3)$] A minimal graded free resolution of $M$ as an $R$-module is a
complex of free $R$-modules
\[\mathbf{F}= \cdots F_{i}\xrightarrow{\partial_{i}}F_{i-1}\rightarrow\cdots\rightarrow F_{1}\xrightarrow{\partial_{1}} F_{0}\rightarrow 0\]\\
such that $\textsc{H}_{i}(\mathbf{F})=0$ for $i>0$, $\textsc{H}_{0}(\mathbf{F})=M$ and
$\partial_{i}(F_{i})\subseteq \mathfrak{m}F_{i-1}$ for all $i\in \mathbb{N}_{0}$. Each $F_{i}$ is isomorphic to
a direct sum of copies of $R(-j)$, for $j\in \mathbb{Z}$. Such a resolution exists and
any two minimal graded free resolutions are isomorphic as complexes of graded $R$-modules.
So, for all $j\in\mathbb{Z}$ and $i\in\mathbb{N}_{0}$  the number of direct summands of $F_{i}$ isomorphic to $R(-j)$  is an invariant of $M$,
 called the $ij$-th graded Betti number of $M$ and denoted by $\beta_{i j}^{R}(M)$. Also, by definition,
 the $i$-th Betti number of $M$ as an $R$-module, denoted by  $\beta^{R}_{i}(M)$, is the rank of $F_{i}$.
 By construction, one has $\beta_{i }^{R}(M)=\dim_{k}\Tor^{R}_{i}(M,k)$ and
$\beta_{i j}^{R}(M)=\dim_{k}\Tor^{R}_{i}(M,k)_{j}.$
 \item[$(4)$] The Poincar\'{e} series of $M$ is defined by
\[\p_{M}^{R}(t)=\sum_{i}\beta_{i}^{R}(M)t^{i}\in \mathbf{Q}[|t|],\]
and its bigraded version is

\[\p^R_{M}(s,t)=\sum_{i,j}^{R}\beta_{i j}(M)t^{i}s^{j}\in\mathbf{Q}[s][|t|].\]

\item[$(5)$]  The Castelnuovo-Mumford regularity
of $M$ is defined by \\
\[\reg_{R}(M)=\sup\{j-i| j\in \mathbb{Z}, i\in\mathbb{N} \ \text{and}\ \beta_{i j}^{R}(M)\neq 0 \}.\]\\
This invariant is, after Krull dimension and multiplicity, perhaps the most important invariant of
 a finitely generated graded $R$-module.\\
\item[$(6)$]  Let $R$ be a standard graded $k$-algebra and  $M$ be a graded $R$-module.
The initial degree $\indeg (M)$ of $M$ is the minimum of the $i$ such that $M_{i}\neq 0$.
We say that $M$ has  $d$-linear resolution if $d=\indeg(M)$ and $\reg_{R}(M)=d$. In the case $\indeg(M)=0,$ one can see that
$\reg_{R}(M)=0$ if and only if  the graded Poincar\'{e}  series $\p^R_{M}(s,t)$ can be written
 as a formal power series in the product $st$.
\end{enumerate}
\end{rem}
  We define  now  the notion of regularity to local rings, adopting the notion of
(\c{S}ega [3, Section 2]).
\begin{defin-rem} Let  $(R,\mathfrak{m},k)$ be a local ring (or a standard graded $k$-algebra
with the maximal homogeneous ideal $\m$). Let $M$ be an $R$-module (or a graded $R$-module).
\begin{enumerate}
\item[$(1)$] We denote by $R^{g}$
the associated graded ring $\bigoplus_{i\geq 0}\mathfrak{m}^{i}/\mathfrak{m}^{i+1}$, which is a standard
graded $k$-algebra, and by $M^{g}$ the associated graded $R^{g}$-module
$\bigoplus_{i\geq 0}\mathfrak{m}^{i}M/\mathfrak{m}^{i+1}M$ with respect to the $\mathfrak{m}$-adic filtration. \\
Notice that in the graded case since
$R$ is standard graded, the $k$-algebras $R$  and $R^{g}$ are naturally
 isomorphic. But $M$ and $M^{g}$ need not be isomorphic unless $M$ is generated by elements of degree zero.
\item[$(2)$] Let  $(R,\mathfrak{m},k)$ be a local ring and $N$ be an $R$-module. The regularity of $N$ over $R$ is defined by
\[\reg_{R}(N):=\reg_{R^{g}}(N^{g}).\]
\end{enumerate}
\end{defin-rem}

 \smallskip

 \section{Koszul modules }
We start this section by the construction of the {\em linear part} of
a complex over a local ring $(R,\m,k)$, recalled below.
\begin{con}
A complex of $R$-modules
\[\mathbf{C}=\cdots \rightarrow C_{n+1}\rightarrow C_{n}\xrightarrow{\partial_{n}} C_{n-1}\rightarrow\cdots\]
is said to be minimal if $\partial_{n}(C_{n})\subseteq \m C_{n-1}$ for all $n\in\mathbb{Z}$. The standard filtration
$\mathcal{F}$ of a minimal complex $\mathbf{C}$ is defined by subcomplexes $\{\mathcal{F}^{i}\mathbf{C}\}_{i\geq 0},$
where
\[(\mathcal{F}^{i}\mathbf{C})_{n}=\m^{i-n}C_{n}\ \text{for all} \ n\in\mathbb{Z}\]
with the convention that $\m^{j}=0$ for $j\leq 0$.
The minimality of the complex $\mathbf{C}$ ensures that
$\partial(\mathcal{F}^{i}\mathbf{C})\subseteq \mathcal{F}^{i}\mathbf{C}$
for each integer $i$ so that $\mathcal{F}$ is a filtration of $\mathbf{C}$.
The associated graded
complex with respect to this filtration is denoted by $\lin^{R}(\mathbf{C})$, and called the linear part of $\mathbf{C}$.
By construction, $\lin^{R}(\mathbf{C})$ is a minimal complex of graded modules over the graded ring $R^{g}$ and
has the property that
$\lin^{R}(\mathbf{C})_{n}=C^{g}_{n}(-n).$
\end{con}
Recall that over local rings, each finitely
generated module has a minimal free resolution, and that this is unique (up to isomorphism).
Thus, one may speak of {\em the} minimal free resolution of such a module.

\indent Now, we introduce the main objects of interest,
Koszul rings and modules. We refer to \cite{H-i} and \cite{S} for detailed studies of Koszul property.\\

\begin{defen}
Let $(R,\m,k)$ be a local ring. An $R$-module $M$ is said to be Koszul if $\lin^{R}(\mathbf{F})$ is acyclic,
 where $\mathbf{F}$ is the minimal free resolution of $M$. $R$ is Koszul if its residue field $k$ is Koszul as a
 module over $R$.
\end{defen}
\begin{rem} The notion of Koszul modules can be defined in the graded case.
\begin{enumerate}
\item[$(1)$]Let $R$ be a standard graded $k$-algebra with the maximal homogeneous ideal $\mathfrak{m}$
and $M$ be a graded $R$-module.
  Let $\mathbf{F}$ be the  minimal graded free resolution
 of  $M$. As in the local case, one can define the standard filtration of  $\mathbf{F}$ with respect to $\mathfrak{m}$
 and the associated graded complex $\lin^{R}(\mathbf{F})$. We remind that since
$R$ is standard graded, the $k$-algebras $R$  and $R^{g}$ are naturally
 isomorphic. Given these considerations, one can speak of Koszul modules.
 If a graded $R$-module $M$ has a linear resolution, then it is Koszul, but the converse
 fails in general (see \cite[1.9]{H-i}).\\
 In keeping with tradition, when the $R$-module $k$ is Koszul, we say that $R$ is a Koszul algebra, rather than a Koszul ring.
In fact,  a local ring $R$ is a Koszul ring if and only if $R^{g}$ is a Koszul algebra.
\item[$(2)$] In both local and graded case, Koszul modules are characterized by the property that their associated graded module $M^{g}$ has  $0$-linear resolution over the associated graded ring $R^{g}$.\\
\end{enumerate}
\end{rem}
\indent S. Iyengar and T. R\"{o}mer in \cite{T-I} defined $R$ to be absolutely Koszul if every finitely generated $R$-module has a high syzygy module
 which is  Koszul. While absolutely Koszul rings have
to be Koszul, the converse does not hold; see the discussion in the introduction of \cite{H-i}.\\
   \indent The most important property of Koszul algebras is that every graded module over  a Koszul algebra has  finite regularity.
 Avramov and Eisenbud in \cite{A-E} proved that, if $R$ is a Koszul algebra which is a homomorphic image of a polynomial ring $S$,
then for every  graded $R$-module $M$\\
 \[\reg_{R}(M)\leq \reg_{S}(M).\]

 In the this section we will improve this upper bound for some classes  of Koszul rings.\\
\indent Using a result of \c{S}ega (\cite[ Proposition 2.3]{S}) and Herzog and Iyengar  (\cite[Proposition 1.5]{H-i}), we have the following
 equivalent conditions for the Koszul property of a module over a local ring or
 a standard graded algebra.
 \begin{thm}\label{Koszul-eq}
 Let $R$ be a local ring  (or a standard graded $k$-algebra) and $M$ be an $R$-module (or a graded $R$-module).
 The following statements are equivalent.
 \begin{enumerate}
 \item $M$ is a Koszul $R$-module.
  \item $M^{g}$ has $0$-linear resolution  .
 \item $\reg_{R^{g}}(M^{g})=0$.
 \item $\p^{R^{g}}_{M^{g}}(s,t)$ can be written as a formal power series in the product $st$.
 \end{enumerate}

 \end{thm}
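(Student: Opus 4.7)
\medskip

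\noindent\textbf{Proof proposal.} The plan is to establish the chain of implications $(1)\Leftrightarrow (2)\Leftrightarrow (3)\Leftrightarrow (4)$, relying on the construction of the linear part reviewed just before the theorem together with the characterization of regularity zero in terms of the bigraded Poincar\'e series stated in Remark~\ref{nota}$(6)$.

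First, I would record the fundamental observation that $M^{g}$ is generated in degree $0$ as an $R^{g}$-module, so $\indeg(M^{g})=0$ whenever $M\neq \m M$. Under this hypothesis, Remark~\ref{nota}$(6)$ directly yields $(3)\Leftrightarrow(4)$, since by definition $M^{g}$ has $0$-linear resolution exactly when $\reg_{R^{g}}(M^{g})=0$, and the latter is equivalent to the bigraded Poincar\'e series being a power series in $st$. This also gives $(2)\Leftrightarrow(3)$ tautologically from the definitions.

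The main content is thus $(1)\Leftrightarrow(2)$. By the construction recalled above, $\lin^{R}(\mathbf{F})$ is a minimal complex of free graded $R^{g}$-modules with $\lin^{R}(\mathbf{F})_{n}=F^{g}_{n}(-n)$; in particular the differentials are represented by matrices of linear forms. A direct spectral-sequence / filtration argument (the one used by Herzog--Iyengar in \cite[Proposition~1.5]{H-i}) shows that
\[
H_{0}\bigl(\lin^{R}(\mathbf{F})\bigr) \;\cong\; M^{g}.
\]
Therefore $\lin^{R}(\mathbf{F})$ is acyclic in positive degrees if and only if it is a minimal graded free resolution of $M^{g}$ whose $n$-th module is generated in internal degree $n$, i.e.\ a $0$-linear resolution of $M^{g}$. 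This is precisely $(1)\Leftrightarrow(2)$.

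Finally, I would invoke \c{S}ega's Proposition~2.3 in \cite{S} to transfer the graded equivalences back to the local case: her result ensures that for a local ring $R$ with module $M$, the Koszul property is detected on $R^{g}$ through the bigraded Poincar\'e series of $M^{g}$, so conditions $(3)$ and $(4)$ retain their meaning in the local setting via the definition $\reg_{R}(M)=\reg_{R^{g}}(M^{g})$ given above. The only delicate point is keeping track of whether $M$ (resp.\ $R$) is local or standard graded so that ``$R^{g}$'' and ``$M^{g}$'' are unambiguous; in the graded case $R\cong R^{g}$ naturally but $M^{g}$ may genuinely differ from $M$ unless $M$ is generated in a single degree, and I would treat this reduction at the outset.
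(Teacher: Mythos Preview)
Your proposal is correct and follows exactly the route the paper indicates: the paper does not give an independent proof but simply attributes the equivalences to \c{S}ega \cite[Proposition~2.3]{S} and Herzog--Iyengar \cite[Proposition~1.5]{H-i}, which are precisely the two ingredients you invoke for $(1)\Leftrightarrow(2)$ and for handling the local versus graded cases, while $(2)\Leftrightarrow(3)\Leftrightarrow(4)$ are, as you note, immediate from the definitions and Remark~\ref{nota}(6). Your write-up is a faithful unpacking of the paper's citation.
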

 \indent By \cite[Proposition. 1.8]{H-i},   a Koszul module $M$ over a
local ring (or  a standard graded $k$-algebra) $R$ satisfies\\
\[\p^{R}_{M}(t)=\p^{R^{g}}_{M^{g}}(t)=\textsc{H}_{M^{g}}(-t)/\textsc{H}_{R}(-t).\]\\
When $R$ is a standard graded $k$-algebra, L\"{o}fwall  proved that this formula, with $M$ equal to $k$, is actually equivalent to $R$
being Koszul (see \cite[Theorem 1.2]{Lo}). In the following we improve this result to modules.

\begin{thm}
Let $R$ be a standard graded $k$-algebra   and  $M$ be
a  graded $R$-module. The following conditions are equivalent.

\begin{enumerate}
\item $M$ is a Koszul module.
 \item $\p^{R}_{M^{g}}(t)=\textsc{H}_{M^{g}}(-t)/ \textsc{H}_{R}(-t)$.
\end{enumerate}
\end{thm}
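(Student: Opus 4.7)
The plan is to exploit two distinct readings of the bigraded Betti numbers $\beta_{ij}:=\beta_{ij}^{R}(M^g)$: first as the total Betti numbers $\beta_i(M^g)=\sum_j\beta_{ij}$, and second via the Euler identity arising from the minimal graded free resolution of $M^g$, namely
\[
\textsc{H}_{M^g}(s)=\textsc{H}_R(s)\,\p^R_{M^g}(s,-1).
\]
The direction $(1)\Rightarrow(2)$ is essentially the formula of Herzog and Iyengar displayed just before the theorem: since $R$ is standard graded, $R$ and $R^g$ are naturally isomorphic, so $\p^R_{M^g}(t)=\p^{R^g}_{M^g}(t)$, and one reads off $(2)$ from $\p^{R^g}_{M^g}(t)=\textsc{H}_{M^g}(-t)/\textsc{H}_R(-t)$.

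For the substantive direction $(2)\Rightarrow(1)$ I would substitute $s=-t$ in the Euler identity to get
\[
\textsc{H}_{M^g}(-t)/\textsc{H}_R(-t)=\p^R_{M^g}(-t,-1)=\sum_{i,j}(-1)^{i+j}\beta_{ij}\,t^{j},
\]
and then compare this, coefficient by coefficient, with $\p^R_{M^g}(t)=\sum_{i,j}\beta_{ij}\,t^{i}$. Two preliminary observations are required. First, matching the lowest-order term on each side forces $\indeg(M^g)=0$: the left side of $(2)$ has nonzero constant coefficient $\beta_0(M^g)$, while the lowest-order term of $\textsc{H}_{M^g}(-t)/\textsc{H}_R(-t)$ is $(-1)^{d}\dim_k(M^g)_d\,t^{d}$ with $d=\indeg(M^g)$. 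Second, combined with the minimality of the resolution, this yields the triangular vanishing $\beta_{ij}=0$ whenever $j<i$. Equating coefficients of $t^{n}$ in $(2)$ then produces, for every $n\ge 0$,
\[
\sum_{j\ge n}\beta_{n,j}=\sum_{i=0}^{n}(-1)^{n-i}\beta_{i,n}.
\]

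The heart of the proof is an induction on $n$ showing $\beta_{n,j}=0$ for every $j\neq n$. For $n=0$ the right-hand side reduces to $\beta_{0,0}$, and since every term on the left is non-negative this forces $\beta_{0,j}=0$ for $j>0$. In the inductive step, the hypothesis $\beta_{i,j}=0$ for $i<n$ and $j\neq i$ forces $\beta_{i,n}=0$ for every $i<n$, so the right-hand side collapses to $\beta_{n,n}$; together with non-negativity and the triangular vanishing this gives $\beta_{n,j}=0$ for $j>n$. Hence the Betti table of $M^g$ is diagonal, $M^g$ admits a $0$-linear resolution, and $M$ is Koszul by Theorem~\ref{Koszul-eq}.

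The main obstacle I foresee is extracting simultaneously the two consequences of hypothesis $(2)$ that make the induction run: the triangular vanishing $\beta_{ij}=0$ for $j<i$ (needed to truncate the left-hand sum at $j\ge n$), and the alternating-sum identity above. Once both are in hand the cancellation on the right-hand side is transparent and the verification that the off-diagonal Betti numbers vanish is essentially routine bookkeeping on the Betti table; this argument can be viewed as a module-theoretic generalization of L\"ofwall's criterion, to which it reduces when $M=k$.
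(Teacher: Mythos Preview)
Your proof is correct and essentially the same as the paper's: both induct on $n$ to show $\beta_{n,j}=0$ for $j\neq n$ by comparing the identity from hypothesis~(2) against the numerical information coming from the minimal resolution of $M^g$, the only cosmetic difference being that you package the latter via the Euler identity $\textsc{H}_{M^g}(s)=\textsc{H}_R(s)\,\p^R_{M^g}(s,-1)$ while the paper writes out the degree-$i$ strand of the resolution as an explicit exact sequence of $k$-vector spaces. One minor remark: $\indeg(M^g)=0$ holds automatically since $(M^g)_0=M/\m M\neq 0$, so your first preliminary observation is superfluous (though not wrong).
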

\begin{proof}
The implication $(1)\Rightarrow(2)$ follows from \cite[proposition 1.8]{H-i}. \\
We prove now $(2)\Rightarrow (1)$. Let $\p_{M}^{R}(t)=\sum_{i\geq0}b_{i}t^{i}$, $\textsc{H}_{R}(t)=\sum_{i\geq0}h_{i}t^{i}$ and $\textsc{H}_{M^{g}}(t)=\sum_{i\geq0}a_{i}t^{i}$. Consider the minimal
graded $R$-free resolution of $M^{g}$:

\[\cdots\rightarrow \bigoplus _{j}R(-j)^{\beta_{i ,j}}\rightarrow \cdots\rightarrow\bigoplus _{j}R(-j)^{\beta_{1, j}}\rightarrow \bigoplus _{j}R(-j)^{\beta_{0, j}}\rightarrow M^{g}\rightarrow 0.\]

We use induction on $i$ to show that $\beta_{i,j}=0$ for $i\geq 0$ with $i\neq j. $  The case $i=0$ is clear, because $M^{g}$
is generated by elements of degree $0$.
Let $i> 0$ and suppose by induction that $\beta_{r,j}=0$ for $r<i$ and $r\neq  j$. Then $b_{r}=\beta_{r,r}$ for $r<i$.
 The above resolution induces the following exact sequence of $k$-vector
spaces\\
\[0\rightarrow R_{0}^{\beta_{i,i}}\rightarrow R_{1}^{b_{i-1}}\rightarrow\cdots\rightarrow R_{i}^{b_{0}}\rightarrow (M^{g})_{i}\rightarrow 0.\]\\

\noindent Notice  that $R(-i-1)_{i}=0$.  Therefore  $a_{i} =\sum_{r=0}^{ i}(-1)^{r}\dim_{k} R_{i-r}^{\beta_{r,r}}$ and by assumption we have
$a_{i}=\sum_{r=0}^{r=i}(-1)^{r}b_{r}h_{i-r}. $ So, $\beta_{i,i}=b_{i}$, which implies that $\beta_{i,j}=0$ for $j>i $.
 Now, the result follows from  Theorem \ref{Koszul-eq}.
\end{proof}

\indent There are various methods for proving that a $k$-algebra is Koszul, depending on the kind of information and presentation of the algebra one has, see for example Section 3 in \cite{CNR}.
 One of  them is the concept of Koszul filtration. This notion, introduced in
 \cite{CTV}, was inspired by the work of Herzog, Hibi and Restuccia (\cite{H-H-R}) on strongly Koszul algebras.
 Let us first recall the standard definition of Koszul filtration.

 \begin{defen}\label{Koszul-filt}
 Let $R$ be a standard graded $k$-algebra. A family $\mathcal{F}$ of ideals of $R$
 is said to be a Koszul filtration of $R$ if:
 \begin{enumerate}
 \item[(1)]Every ideal $I\in \mathcal{F}$ is generated by linear forms (i.e homogeneous element of degree 1 ).
 \item [ (2)] The ideal (0) and the maximal homogeneous ideal $\m$ of $R$ belong to $\mathcal{F}$.
 \item [ (3)] For every $I\in \mathcal{F}$ different from (0), there exists $J\in \mathcal{F}$ such that $J\subseteq I
 , I/J  \ \text{is cyclic and}\ J:I\in \mathcal{F}. $
  \end{enumerate}
 \end{defen}
  An important class of rings with a Koszul filtration are rings defined by
 quadratic monomial relations.
 If $R = K[x_{1},\cdots, x_{n}]/I$ where $I$ is generated
 by monomials of degree $2$, then it is easy to see that the family of all the ideals
 generated by subsets of $\{x_{1},\cdots, x_{n}\}$ is a Koszul filtration for $R$.\\
 \indent The following result is well known  (see  \cite[Proposition 1.2]{CTV}).

 \begin{prop}\label{fit-lin}
 Let $\mathcal{F}$ be a Koszul filtration of $R$. Then for every ideal $I\in\mathcal{F},$
 the $R$-module $R/I$ has $0$-linear resolution.
 \end{prop}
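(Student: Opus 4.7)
My plan is to prove, for every $i \ge 0$, every $j > i$, and every $I \in \mathcal{F}$, the vanishing $\Tor^R_i(R/I,k)_j = 0$. Since $\indeg(R/I) = 0$, this is exactly the assertion that $R/I$ has $0$-linear resolution. The argument proceeds by well-founded double induction: the primary induction is on the homological degree $i$, and the secondary induction is on $I \in \mathcal{F}$ ordered by strict inclusion.

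The secondary induction is legitimate because $\mathcal{F}$ satisfies the descending chain condition. Indeed, an ideal generated by linear forms is determined by its degree-one part, a subspace of the finite-dimensional $k$-vector space $R_1$; hence no infinite strict descending chain in $\mathcal{F}$ exists. The base cases are immediate: when $i = 0$, the module $R/I$ is generated in degree $0$, and when $I = (0)$, the module $R/(0) = R$ is free.

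For the inductive step with $i \ge 1$ and $I \ne (0)$, the Koszul filtration axiom furnishes $J \in \mathcal{F}$ with $J \subsetneq I$, $I = J + (x)$ for some $x \in R_1$, and $J:I = J:x \in \mathcal{F}$. Multiplication by $x$ gives the short exact sequence
\[0 \longrightarrow \bigl(R/(J:I)\bigr)(-1) \xrightarrow{\;\cdot x\;} R/J \longrightarrow R/I \longrightarrow 0,\]
whose long exact sequence for $\Tor^R_*(-,k)$ contains the strand
\[\Tor^R_i(R/J,k)_j \longrightarrow \Tor^R_i(R/I,k)_j \longrightarrow \Tor^R_{i-1}\bigl(R/(J:I),k\bigr)_{j-1}.\]
For $j > i$, the right term vanishes by the primary induction hypothesis applied to $R/(J:I)$ at homological degree $i-1$ (since $j-1 > i-1$), and the left term vanishes by the secondary induction hypothesis applied to the pair $(i, J)$ with $J \subsetneq I$. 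Exactness then forces $\Tor^R_i(R/I,k)_j = 0$, completing the induction.

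The main obstacle is recognizing that single induction on $i$ is insufficient: the long exact sequence produces a term $\Tor^R_i(R/J,k)_j$ at the very homological degree $i$ that one is trying to control. The remedy is a secondary induction along strict inclusions $J \subsetneq I$ within a fixed homological level, made legitimate by DCC on $\mathcal{F}$; once this is in place the remainder is a routine diagram chase.
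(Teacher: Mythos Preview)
Your argument is correct and is precisely the standard proof from \cite{CTV}; the paper itself does not supply a proof for this proposition but simply records it as well known with a reference to \cite[Proposition~1.2]{CTV}. One minor remark: the definition in the paper writes $J\subseteq I$ rather than $J\subsetneq I$, so strictly speaking you should justify the strict inclusion---but this is immediate, since $J=I$ would force $J:I=R$, and $R$ is not generated by linear forms, hence $R\notin\mathcal{F}$.
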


A simple way to prove that an algebra is Koszul is to show that the defining ideal is generated by a  Gr\"{o}bner basis of quadrics.
 A ring has this  property  if it possesses  a special Koszul filtration, called  Gr\"{o}bner flag (see \cite[Theorem 2.4]{CRV}).

 \begin{defen}\label{flag}
 Let $R$ be a standard graded $k$-algebra with the maximal homogeneous ideal $\m$. A Gr\"{o}bner flag of $R$ is a
Koszul filtration $\mathcal{F}$ of $R$ which consists of a single complete flag. In other words,
a Gr\"{o}bner flag is a set of ideals $\mathcal{F} = \{(0)=(V_{0}), (V_{1}),\cdots, (V_{n-1}), (V_{n})=\m\},$
where $V_{i}$ is an $i$-dimensional subspace of $R_{1}$, $ V_{i}\subset  V_{i+1}$ and
there exists $j_{i}\in\{1,\cdots,n\}$ such that $(V_{i-1}):(V_{i})=(V_{j_{i}})$.\\
 \indent This is equivalent to say that there exists an ordered  system of generators $l_{1},\cdots,l_{n}$
of $R_{1}$ (a basis of the flag) such that for every $i=1,\cdots,n$ we have \\
\[(l_{1},\cdots,l_{i-1}):l_{i}=(l_{1},\cdots,l_{j_{i}}).\]
\end{defen}
\begin{defin-rem}
Let $R$ be a standard graded $k$-algebra with the   maximal homogeneous ideal $\m$. Assume that $R$ has a Koszul filtration, say $\mathcal{F}$.
  A chain $I_{0}\subset I_{1}\subset\cdots\subset I_{n}=\m$ of elements of $\mathcal{F}$ is called   a flag in $\mathcal{F}$ starting from $I_{0}$,
if  for all $1\leq i\leq n,$   $(I_{i-1}:I_{i})\in\mathcal{F}$ and $I_{i}/I_{i-1}$ is cyclic.
It is straightforward to see that, any Koszul filtration has a flag starting from $(0)$.\\
\indent If $\mathcal{F}$ is a Gr\"{o}bner flag, then for  every element
$I\in\mathcal{F}$ there exists  a flag in $\mathcal{F}$ starting from $I.$ But it is not a necessary condition. \\
\indent Let $S=k[x_{1},\cdots,x_{n}]$ be a polynomial ring and $R=S/I$ where $I$ is a graded ideal generated by monomials of degree $2$.
As we mentioned before the family of all the ideals generated by subsets of $\{x_{1},\cdots,x_{n}\}$ is a Koszul filtration of $R$. One can see that for each element $I$  of this filtration there exist a flag starting from $I$.
In \cite[Example 2.6]{CRV}, it is shown that this Koszul filtration of  $R=k[x,y,z]/(x^2, xy, yz, z^2)$ is not a Gr\"{o}bner flag.

\end{defin-rem}
For a flag $I_{0}\subset I_{1}\subset\cdots\subset I_{n}=\m$ in a Koszul filtration $\mathcal{F}$
 the following result gives  us a relation between the bigraded Poincar\'{e} series of modules over a component $R/I_{i}$ and $R$ itself, which will be used later.

 \begin{prop}\label{main}

 Let $R$ be  a standard graded $k$-algebra with the maximal homogeneous ideal $\m$, which has  a Koszul filtration
 $\mathcal{F}$. Assume  that  $I_{0}\subset I_{1}\subset\cdots\subset I_{n}=\m$ is a flag in $\mathcal{F}$.
  If $M$ is a graded $R/I_{r}$-module
 for some $0\leq r\leq n$, then \\
 \[\p^R_{M}(s,t) = \p^{R/I_{r}}_{M}(s,t) \p^R_{R/I_{r}}(s,t).\]

 \end{prop}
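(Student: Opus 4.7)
The plan is to establish the identity by iterating a \emph{single-step} multiplicativity along the flag. For each consecutive pair $I_{i-1}\subset I_i$ with $I_i=I_{i-1}+(l_i)$ and $I_{i-1}:l_i\in\mathcal F$, and for any $R/I_i$-module $N$, I would show
\[
P^{R/I_{i-1}}_N(s,t)\;=\;P^{R/I_i}_N(s,t)\cdot P^{R/I_{i-1}}_{R/I_i}(s,t).
\]
Iterating this identity along $I_0\subset I_1\subset\cdots\subset I_r$ (and prolonging downward to $(0)$ via the descent property of $\mathcal F$ if $I_0\neq(0)$) and combining with the trivial base case $I=(0)$ gives the proposition.

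To prove the single step, set $A:=R/I_{i-1}$, $B:=A/(\bar l_i)=R/I_i$, and $\bar J:=(I_{i-1}:l_i)/I_{i-1}=(0:_A\bar l_i)$. The family $\{K/I_{i-1}:K\in\mathcal F,\,K\supseteq I_{i-1}\}$ is a Koszul filtration of $A$ containing $\bar J$, so by Proposition \ref{fit-lin} the $A$-module $B$ admits a $0$-linear minimal $A$-free resolution $L_\bullet$ with $L_q\cong A(-q)^{\beta^A_q(B)}$. Given a minimal $B$-free resolution $G_\bullet\to N$ with $G_p=\bigoplus_j B(-j)^{\beta^B_{p,j}(N)}$, the plan is to build a minimal $A$-free resolution of $N$ as the total complex of the Cartan--Eilenberg double complex $C_{p,q}:=\bigoplus_j L_q(-j)^{\beta^B_{p,j}(N)}$, equipped with vertical differentials from $L_\bullet$, horizontal $A$-lifts of the $B$-differentials of $G_\bullet$, and the higher homotopy corrections required to ensure $d_T^2=0$. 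By the convergence of the associated spectral sequence this total complex is an $A$-free resolution of $N$.

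The main obstacle is verifying that this total complex is \emph{minimal}, i.e.\ that all horizontal lifts and higher correction maps can be chosen with matrix entries in $\m_A$. Here the $0$-linearity of $L_\bullet$ (coming from $\bar J\in\mathcal F_A$) is decisive: tracking internal degrees, the obstruction $\tilde\partial_{p-1}\tilde\partial_p$ lands in $(\bar l_i)\cdot L^{(p-2)}_0$ with entries of degree $\geq 2$, and since $d^v\colon L_1(-j'')\to L_0(-j'')$ is multiplication by $\bar l_i$, the lift $d_2$ can be realized with entries of degree $\geq 1$; the same internal-degree bookkeeping forces all $d_t$ for $t\geq 2$ to have entries in $\m_A$.

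Once minimality is in hand, $\beta^A_{n,j}(N)=\sum_{p+q=n}\beta^B_{p,\,j-q}(N)\cdot\beta^A_q(B)$, which in bigraded Poincar\'e series form reads $P^A_N(s,t)=P^B_N(s,t)\cdot P^A_B(s,t)$, completing the single step. The proposition then follows by iteration. The delicate step — and the essential use of the Koszul-filtration hypothesis — is the minimality of the Cartan--Eilenberg total complex, since without the $0$-linearity of the $A$-resolution of $B$ the correction terms may force non-minimal contributions and the identity breaks down.
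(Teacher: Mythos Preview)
Your approach is genuinely different from the paper's, and it contains a real gap. The paper does not iterate the Poincar\'e-series identity along the flag. Instead it uses the short exact sequences $0\to (R/I_{j_i})(-1)\to R/I_{i-1}\to R/I_i\to 0$ together with the $0$-linearity of each $R/I_{j_i}$ over $R$ (Proposition~\ref{fit-lin}) to show that $\Tor_t^R(R/I_{i-1},k)\to\Tor_t^R(R/I_i,k)$ is injective for every $t$; composing along the flag yields injectivity of $\Tor_t^R(R/I_r,k)\to\Tor_t^R(k,k)$, so that $R\to R/I_r$ is a \emph{large} homomorphism in Levin's sense, and the bigraded identity then follows from (the graded translation of) the spectral-sequence argument in \cite[Theorem~1.1]{L}. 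All the homological algebra takes place over $R$ itself.

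The gap in your argument is the assertion that $\{K/I_{i-1}:K\in\mathcal F,\ K\supseteq I_{i-1}\}$ is a Koszul filtration of $A=R/I_{i-1}$. Given $K\in\mathcal F$ with $K\supsetneq I_{i-1}$, axiom~(3) of Definition~\ref{Koszul-filt} produces some $J\in\mathcal F$ with $J\subset K$, $K/J$ cyclic and $J{:}K\in\mathcal F$; but nothing forces $J\supseteq I_{i-1}$, and a Koszul filtration is not closed under sums, so $J+I_{i-1}$ need not lie in $\mathcal F$. (The obstruction disappears for a Gr\"obner flag, whose members are totally ordered, but the proposition is stated for an arbitrary Koszul filtration containing a flag.) Without a Koszul filtration on $A$ you cannot invoke Proposition~\ref{fit-lin} to obtain a $0$-linear $A$-resolution of $B=R/I_i$, and your minimality argument for the Cartan--Eilenberg total complex---which you yourself identify as the crux---rests entirely on that $0$-linearity. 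The very first step ($A=R$, where $\mathcal F$ is available) is fine; the iteration breaks at the second step and beyond.
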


 \begin{proof}
 Let $1\leq i \leq n$. By definition of a flag,
 $I_{j_{i}}:=(I_{i-1}:I_{i})\in \mathcal{F}$ and $I_{i}/I_{i-1}$ is cyclic. So $I_{i}/I_{i-1}\cong(R/I_{j_{i}})(-1)$ and
  we get the exact sequence \\ \[0\longrightarrow( R/I_{j_{i}})(-1)\longrightarrow R/{I_{i-1}} \longrightarrow R/{I_{i}}\longrightarrow 0 \]\\
 of graded modules and homomorphisms. This induces the long exact sequence

 \[\cdots\longrightarrow \Tor_{t}^{R}(R/I_{j_{i}},k)_{s-1}\longrightarrow \Tor_{t}^{R}(R/I_{i-1},k)_{s} \longrightarrow \Tor_{t}^{R}(R/I_{i},k)_{s}\] \  \[ \longrightarrow \Tor_{t-1}^{R}(R/I_{j_{i}},k)_{s-1}\longrightarrow \cdots\]
 for all integers $s,t\geq 0$. As $R/I_{j_{i}}$ has $0$-linear resolution over $R$,
 $\Tor_{t}^{R}(R/I_{j_{i}},k)_{s-1}=0$ for $s= t$. Therefore, the map\\
 \[ \Tor_{t}^{R}(R/I_{i-1},k)_{t} \longrightarrow \Tor_{t}^{R}(R/I_{i},k)_{t}\] \\
 is injective for all $t\geq 0$. This, in conjunction with the fact that the graded $R$-module
 $\Tor^{R}_{t}(R/I_{i},k)$  is concentrated in degree
 $t$ for $i=1,\cdots,n$, implies that the map \\
 \[ \Tor_{t}^{R}(R/I_{i-1},k) \longrightarrow \Tor_{t}^{R}(R/I_{i},k),\] \\
 which is induced by the natural  epimorphism $R/I_{i-1}\longrightarrow R/I_{i},$
 is injective for all $1\leq i\leq n$. Hence for each $i=0,\cdots,n$,
 the homomorphism \\ \[  \Tor_{t}^{R}(R/I_{i},k) \longrightarrow \Tor_{t}^{R}(k,k),\] \\
 induced by the natural surjective ring homomorphism $  R/I_{i}\longrightarrow k,$ is injective. \\
 Now, it is not difficult  to see  that, the proof of the implication $(3)\Rightarrow(2)$ in \cite[Theorem 1.1]{L} translates smoothly to the graded case.
 Namely, that argument uses spectral sequences that preserve grading and it leads  to the following conclusion

 \begin{equation}\label{1}
 \p^R_{M}(s,t) = \p^{R/(I_{r})}_{M}(s,t) \p^R_{R/(I_{r})}(s,t).
 \end{equation}
  \end{proof}

 \begin{cor}\label{EQ-Koszul}

 Let the situations be as in \ref{main}. Then  $M$ is  Koszul  over $R$ if and only if $M$ is Koszul over $R/I_{r}$.
 \end{cor}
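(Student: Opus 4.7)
The plan is to deduce the corollary directly from the factorization of bigraded Poincar\'e series given by Proposition \ref{main}, combined with the Poincar\'e-series characterization of Koszulness in Theorem \ref{Koszul-eq}(4). The whole argument is essentially an exercise in manipulating formal power series and does not require revisiting the spectral-sequence machinery underlying Proposition \ref{main}.

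First, I would note that since $I_r \in \mathcal{F}$, Proposition \ref{fit-lin} shows that $R/I_r$ has a $0$-linear resolution over $R$; hence, by Theorem \ref{Koszul-eq}, the bigraded Poincar\'e series $\p^R_{R/I_r}(s,t)$ is a formal power series in the product $st$, and its constant term is clearly $1$. Write $\p^R_{R/I_r}(s,t) = h(st)$ with $h \in \mathbf{Q}[|u|]$, $h(0)=1$.

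Next, Proposition \ref{main} gives the identity
\[
\p^R_M(s,t) \;=\; \p^{R/I_r}_M(s,t)\cdot \p^R_{R/I_r}(s,t) \;=\; \p^{R/I_r}_M(s,t)\cdot h(st).
\]
Since $h(0)=1$, the factor $h(st)$ is a unit in $\mathbf{Q}[s][|t|]$, and its inverse equals $(1/h)(st)$, again a formal power series in $st$. Thus $\p^R_M(s,t)$ is a formal power series in $st$ if and only if $\p^{R/I_r}_M(s,t)$ is. Applying Theorem \ref{Koszul-eq}(1)$\Leftrightarrow$(4) twice — once with base ring $R$ and once with base ring $R/I_r$, which is itself a standard graded $k$-algebra with maximal homogeneous ideal $\mathfrak{m}/I_r$ — translates these two conditions into the assertions that $M$ is Koszul over $R$ and that $M$ is Koszul over $R/I_r$, respectively, yielding the desired equivalence.

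The only step requiring any care is the closure of the subring of formal power series in $st$ inside $\mathbf{Q}[s][|t|]$ under inversion of elements with constant term $1$; this reduces to the elementary fact that for $h \in \mathbf{Q}[|u|]$ with $h(0)=1$ the inverse $1/h$ exists in $\mathbf{Q}[|u|]$, after which one substitutes $u = st$. Everything else is a direct quotation of Proposition \ref{main}, Proposition \ref{fit-lin}, and Theorem \ref{Koszul-eq}.
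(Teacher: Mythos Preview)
Your approach is exactly the one the paper takes, and your handling of the unit $h(st)$ is fine. There is, however, one small slip: Theorem~\ref{Koszul-eq}(4) characterizes Koszulness of $M$ through the bigraded Poincar\'e series $\p^{R^{g}}_{M^{g}}(s,t)$, not through $\p^{R}_{M}(s,t)$. For a graded module not generated in degree $0$ these can differ; for instance $M=R(-1)$ has $\p^{R}_{M}(s,t)=s$, which is not a series in $st$, yet $M$ is Koszul since $M^{g}\cong R$. So the equivalence you extract from the factorization of $\p^{R}_{M}$ is not literally the one in Theorem~\ref{Koszul-eq}.

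The paper fixes this by applying Proposition~\ref{main} to $M^{g}$ rather than to $M$: since $I_{r}$ is generated in degree~$1$ and $I_{r}M=0$, one has $I_{r}M^{g}=0$ as well, so $M^{g}$ is again a graded $R/I_{r}$-module and the factorization
\[
\p^{R}_{M^{g}}(s,t)=\p^{R/I_{r}}_{M^{g}}(s,t)\cdot h(st)
\]
holds. Now Theorem~\ref{Koszul-eq}(1)$\Leftrightarrow$(4) applies verbatim on both sides, using that $R^{g}\cong R$, $(R/I_{r})^{g}\cong R/I_{r}$, and that the $\m$-adic and $(\m/I_{r})$-adic associated graded modules of $M$ coincide. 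Make this one-symbol substitution ($M\mapsto M^{g}$) and your argument is complete and identical to the paper's.
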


 \begin{proof}
 Using \ref{fit-lin} $R/I_{r}$ has  $0$-linear resolution over $R$. So, by Theorem \ref{Koszul-eq}, $\p^R_{R/I_{r}}(s,t)$
 can be written as a formal power series in the product $st$.
 Therefore, the equality  (\ref{1})  implies that $\p^{R}_{M^{g}}(s,t)$
 is a formal power series in the product $st$ if and only if $\p^{R/I_{r}}_{M^{g}}(s,t)$ has
 this property. Now, the result follows  from Theorem \ref{Koszul-eq}.
 \end{proof}

\begin{defin-rem}\label{Conca-g} Let $R$ be a ring and $I$ an ideal of $R$.
 We say that $x\in I$ is a Conca generator of $I $  if
 $x\neq 0=x^{2}$ and $xI=I^{2}$. One then has $I^{3} = (xI)I = x^{2}I = 0$, and also $x\not\in I^{2}$.
 The contrary would imply $x \in I^{2} = xI\subseteq I^{3} = 0$, a contradiction.\\
 \indent Let $R$ be   a standard graded $k$-algebra and let $l\in R_{1}$ be a Conca generator of
 the maximal homogeneous ideal $\m$ of $R. $ By \cite[Lemma 2.7]{CRV}, the algebra
 $R$ has a Gr\"{o}bner   flag $\mathcal{F}=\{ 0=(V_{0}), (V_{1}),\cdots, (V_{n-1}), (V_{n})=\m\} $
 with $(V_{1})=(l)$. Thus for a graded $R$-module  $M$ with $lM=0,$
 $M$ is Koszul over $R$ if and only if $M$ is Koszul over $R/lR$, by Corollary \ref{EQ-Koszul}.
\end{defin-rem}

 \indent Avramov, Iyengar and \c{S}ega proved that over a local ring whose maximal ideal has
 a Conca generator, every module has a Koszul syzygy module
 (see \cite{AIS}). In the following we give some information about
 the Koszul modules and the regularity of modules over these  kind of rings. The first statement  was already proved  in  \cite{AIS}, but we insert here a proof
   as a consequence of Corollary \ref{EQ-Koszul}.

 \begin{thm}\label{reg}
 Let $( R, \m, k)$ be a local ring, $x$ be a Conca generator of $\mathfrak{m}$ and $M$ be an $R$-module. Then  the following hold.
 \begin{enumerate}
 \item  $M$ is Koszul if $xM=0$;
 \item $\reg _{R}(M)\leq 1$;
 \item $\mathfrak{m}M$ is Koszul;
 \item For every ideal $I$ of $R$ the quotient ring $R/I$ is a Koszul ring.
 \end{enumerate}
\end{thm}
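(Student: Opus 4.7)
The plan is to reduce everything to the associated graded ring $R^{g}$, since $\reg_{R}(-)=\reg_{R^{g}}((-)^{g})$ and $M$ is Koszul over $R$ precisely when $M^{g}$ has a $0$-linear resolution over $R^{g}$. The initial form $l$ of $x$ is a Conca generator of the maximal homogeneous ideal $\n$ of $R^{g}$, so by \ref{Conca-g} the algebra $R^{g}$ carries a Gr\"{o}bner flag $\mathcal{F}$ whose first nonzero member is $(l)$. The key observation is that $R^{g}/(l)$ has square-zero maximal ideal, because $(\n/(l))^{2}=(\n^{2}+(l))/(l)=(l\n+(l))/(l)=0$; consequently, every graded $R^{g}/(l)$-module generated in a single degree has a linear resolution, as each syzygy sits inside a $k$-vector space.

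For (1), the hypothesis $xM=0$ forces $l$ to act as zero on every $\m^{i}M/\m^{i+1}M$, hence $lM^{g}=0$. Thus $M^{g}$ is a graded $R^{g}/(l)$-module generated in degree $0$, so it has a $0$-linear resolution over $R^{g}/(l)$, and Corollary \ref{EQ-Koszul} applied to the flag in $\mathcal{F}$ with $I_{1}=(l)$ transports this Koszulness to $R^{g}$. For (2), consider the short exact sequence of graded $R^{g}$-modules
\[0\longrightarrow lM^{g}\longrightarrow M^{g}\longrightarrow M^{g}/lM^{g}\longrightarrow 0,\]
whose outer terms are annihilated by $l$ (using $l^{2}=0$ on the submodule) and generated in degrees $1$ and $0$ respectively; by the preliminary observation they admit $1$-linear and $0$-linear resolutions, so the long exact sequence in Tor yields $\reg_{R}(M)=\reg_{R^{g}}(M^{g})\leq 1$.

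For (3), note that $(\m M)^{g}\cong (\n M^{g})(1)$ as graded $R^{g}$-modules, so it suffices to produce a $1$-linear resolution of $\n M^{g}$. This module is generated in degree $1$, so $\beta_{i,j}^{R^{g}}(\n M^{g})=0$ for $j<i+1$. Applying the long exact Tor sequence to
\[0\longrightarrow \n M^{g}\longrightarrow M^{g}\longrightarrow k^{r}\longrightarrow 0\]
(where $r$ is the minimal number of generators of $M$, and $k^{r}$ is concentrated in degree $0$), one obtains $\beta_{i,j}^{R^{g}}(\n M^{g})\leq \beta_{i,j}^{R^{g}}(M^{g})+\beta_{i+1,j}^{R^{g}}(k^{r})$; using $\reg_{R^{g}}(M^{g})\leq 1$ from (2) together with the Koszulness of $R^{g}$ (so $\beta_{i+1,j}^{R^{g}}(k^{r})$ is supported on $j=i+1$), this vanishes for $j>i+1$, forcing a $1$-linear resolution.

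For (4), one splits on whether $x\in I$. If $x\in I$, then $\m^{2}=x\m\subseteq I$, so $\m/I$ has square zero and $R/I$ is Koszul by the preliminary observation. If $x\notin I$, the image $\bar{x}$ is a Conca generator of the maximal ideal of $R/I$ (both $\bar{x}^{2}=0$ and $\bar{x}(\m/I)=(\m/I)^{2}$ are inherited from $R$), and since $k$ is annihilated by $\bar{x}$, part (1) applied to $R/I$ shows that $k$ is Koszul over $R/I$, so $R/I$ is Koszul. The main obstacle is (3): the bare bound $\reg\leq 1$ is insufficient, so one must combine the single-degree generation of $\n M^{g}$ with a careful accounting of Tor degrees to pin its Betti numbers to the diagonal $j=i+1$.
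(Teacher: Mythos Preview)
Your proposal is correct and follows essentially the same route as the paper: pass to $R^{g}$, use that the initial form of $x$ is a Conca generator so $R^{g}$ carries a Gr\"{o}bner flag starting at $(l)$, exploit that $R^{g}/(l)$ has square-zero maximal ideal, and then run the same short exact sequences and Tor computations for (2) and (3). The only visible difference is in (4): the paper handles the case $x\in I$ by writing $I=xR+J$ with $x\notin J$ and transferring Koszulness from $(R/J)^{g}$ via \ref{Conca-g}, whereas you observe directly that $\m^{2}=x\m\subseteq I$ forces $(\m/I)^{2}=0$, which is shorter and perfectly valid.
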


\begin{proof}
 $(1)$ Let $M$ be an $R$-module such that $xM=0$. Set $x^{\ast} $ be the corresponding initial form of $x$ in $R^{g}.$ Thus
  \[x^{\ast}\in R^{g}_{1}  ,\qquad {x^{\ast}}^2=0  ,\qquad x^{\ast}R^{g}_{1}=R^{g}_{2}  \qquad \text{and} \ \ x^{\ast}M^{g} = 0.\]\\
  Let $n=\dim_{k}R^{g}_{1}.$ Then, in view of Definition and Remark \ref{Conca-g},   $R^{g}$ has a  Gr\"{o}bner   flag
  $\mathcal{F}=\{ 0=(V_{0}), (V_{1}),\cdots, (V_{n-1}), (V_{n})=(R^{g}_{1})\} $,
  such that $(V_{1})=(x^{*})$. And
  $M^{g}$ is Koszul over $R^{g}$ if and only if $M^{g}$ is Koszul over $R^{g}/x^{\ast}R^{g}$. Considering the fact that  \[(R^{g}/x^{\ast}R^{g})_{i}= 0 \qquad \text{for all}\ \ i\geq 2,\] \\
 $M^{g}$ has $0$-linear resolution over $R^{g}/x^{\ast}R^{g}$.  Therefore, $M^{g}$ is Koszul
  as an $R^{g}$-module by Corollary \ref{EQ-Koszul}.

 (2) The exact sequence \\
 \[ 0\longrightarrow x^{\ast}M^{g} \longrightarrow M^{g} \longrightarrow M^{g}/x^{\ast}M^{g} \longrightarrow 0\] \\
  induces the long exact sequence  \\
   \begin{equation}\label{reg-1}
  \cdots \longrightarrow \Tor_{i}^{R^{g}}(x^{\ast}M^{g},k)_{j}\longrightarrow \Tor_{i}^{R^{g}}(M^{g},k)_{j} \longrightarrow \Tor_{i}^{R^{g}}(M^{g}/x^{\ast}M^{g},k)_{j}\longrightarrow \ \cdots
  \end{equation}
  for all $j\in \mathbb{Z}.$\\
 Since
 \[x^{\ast}[(x^{\ast}M^{g})(1)]= 0 \qquad and \qquad x^{\ast}(M^{g}/x^{\ast}M^{g})=0,\]\\
 $M^{g}/x^{\ast}M^{g}$ is Koszul and $x^{\ast}M^{g}$ has 1-linear resolution by $(1)$.
 Therefore, \ref{reg-1} implies that $\reg_{R}(M)\leq 1$.\\
 \indent $(3)$ Let $\m^{\ast}$ be the maximal homogeneous ideal of $R^{g}$. We have  $(\m M)^{g}=(\m^{\ast}M^{g})(1)$.
 Thus $\m M$ is Koszul if and only if the graded module $\m^{\ast}M^{g}$ has  $1$-linear resolution.
 Set $t=\dim_{k}M/\m M$.  Then, the exact sequence

 \[ 0\longrightarrow (\m^{\ast}M^{g})(1) \longrightarrow M^{g}(1) \longrightarrow k(1)^{t} \longrightarrow 0\]
 induces the long exact sequence
 \begin{equation*}\label{reg-2}
 \cdots \longrightarrow \Tor^{R^{g}}_{i}(k^{t} ,k)_{j+1} \longrightarrow \Tor_{i}^{R^{g}}((m^{\ast}M^{g})(1),k)_{j}\longrightarrow \Tor_{i}^{R^{g}}(M^{g},k)_{j+1} \longrightarrow \ \cdots  .
  \end{equation*}
  As $R$ is Koszul and $\reg_{R}(M)\leq 1$, we obtain, $\Tor_{i}^{A}(\m^{\ast}M^{g}(1),k)_{j}=0$ for $j> i. $
This implies that $\m M$ is Koszul.\\
  \indent $(4)$ Let $I\subseteq \m$ be an ideal of $R$ and we denote by  $"-"$ the natural image of the homomorphism $R\rightarrow R/I$.
  We consider two cases. If $\bar{x}\neq 0$, then $\bar{x}$ is a
   Conca generator of $\bar{\m}$ and therefore $\bar{R}$ is a Koszul ring. If $\bar{x}=0$,
 then there exists an ideal $J$ of $R$ such that
  $I=xR +J$ and $x\notin J$. Hence, $x+J\in R/J$ is a Conca generator of the maximal ideal of $R/J$. This implies $R/J$ is Koszul and thus, $k$ has  a linear resolution as an $(R/J)^{g}$-module.
  Also, one can see that
  $I^{*}=x^{*}R^{g}+J^{*}$, where $I^{*}$ and $J^{*}$ are ideals generated by initial forms of elements of $I$ and $J$ , respectively.
  Now, since $(R/J)^{g}= R^{g}/J^{*}$ and $(R/I)^{g}=R^{g}/I^{*}$, the conclusion follows from \ref{Conca-g}, in conjunction with the fact that $k$ has  $0$-linear resolution over $(R/J)^{g}$.
 \end{proof}

For a Cohen-Macaulay local ring $R$, Abhyankar (\cite{A}) proved  the following inequality
\[e\geq h+1,\]
where $e$ is the multiplicity of $R$ and $h=\textrm{embdim}R-\dim R$. We say that a Cohen-Macaulay local ring
$R$ has minimal multiplicity if the equality $e= h+1$ holds.

The following result extends to modules a result that was already known for local rings (see \cite[Theorem 5.2]{H-V-W}).

\begin{prop}\label{min-mult}
Let $(R,\m,k)$ be a Cohen-Macaulay local ring of dimension $d>0$ and of  minimal multiplicity. Assume that $k$ is infinite.
Then any $R$-module annihilated by a minimal reduction of $\m$ is a Koszul module. In particular, $R$ is a Koszul ring.
\end{prop}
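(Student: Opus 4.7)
The plan is to build a Koszul filtration on $R^g$ whose flag passes through the ideal generated by the initial forms of a minimal reduction, and then apply Corollary \ref{EQ-Koszul}. Since $k$ is infinite, fix a minimal reduction $J = (x_1, \ldots, x_d)$ of $\m$. The hypothesis of minimal multiplicity gives $\m^2 = J\m$, which lifts to the identity $(\m^*)^2 = J^* \m^*$ in $R^g$, where $J^* := (x_1^*, \ldots, x_d^*)$. Moreover, Sally's theorem for Cohen--Macaulay local rings of minimal multiplicity ensures that $R^g$ is itself Cohen--Macaulay and that $x_1^*, \ldots, x_d^*$ forms a regular sequence of linear forms on $R^g$.

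Next, I would extend $\{x_1^*, \ldots, x_d^*\}$ to a $k$-basis $\{x_1^*, \ldots, x_d^*, y_1^*, \ldots, y_h^*\}$ of $R^g_1$ and consider the ascending chain
\[
(0) \subset (x_1^*) \subset \cdots \subset J^* \subset (J^*, y_1^*) \subset \cdots \subset \m^*.
\]
The collection of these ideals should form a Koszul filtration $\mathcal{F}$ of $R^g$ in the sense of Definition \ref{Koszul-filt}. The regular sequence property yields $(x_1^*, \ldots, x_{i-1}^*) : x_i^* = (x_1^*, \ldots, x_{i-1}^*)$ for $1 \leq i \leq d$, which lies in $\mathcal{F}$. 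For the later steps, the containment $y_j^* \m^* \subseteq (\m^*)^2 = J^* \m^* \subseteq J^*$, together with the $k$-linear independence of the $y_j^*$ modulo $\m^{*2}$, forces the corresponding colon ideal to equal $\m^*$, which also lies in $\mathcal{F}$. The chain above is then a flag in $\mathcal{F}$ starting at $(0)$ and reaching $J^*$ at its $d$-th stage.

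With the filtration in place, I would invoke Corollary \ref{EQ-Koszul} with $I_d = J^*$. Because $JM = 0$ one has $J^* M^g = 0$, so $M^g$ is a graded $R^g/J^*$-module, and therefore $M^g$ is Koszul over $R^g$ if and only if it is Koszul over $R^g/J^*$. The ring $R^g/J^*$ has maximal ideal of square zero, since $(\m^*)^2 \subseteq J^*$, and over any graded $k$-algebra of this form every module generated in degree zero admits a $0$-linear resolution: inductively, the $i$-th syzygy sits inside $\overline{\m}\, F_i$ and is annihilated by $\overline{\m}$, hence is concentrated in the single degree equal to the homological degree. Because $M^g$ is generated in degree zero, it is Koszul over $R^g/J^*$, hence over $R^g$, and thus $M$ is Koszul over $R$. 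Taking $M = k$ yields the Koszulness of $R$ as a special case.

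The main technical obstacle I foresee is verifying that the family $\mathcal{F}$ above genuinely satisfies the colon condition of a Koszul filtration. This rests on Sally's theorem (to ensure that the initial forms of a minimal reduction form a regular sequence on $R^g$) and on the identity $(\m^*)^2 = J^* \m^*$; it also requires checking that each $y_j^*$ escapes the previously constructed ideal in the flag, so that its colon is exactly $\m^*$ and not the whole ring --- this is immediate from $\{x_1^*, \ldots, x_d^*, y_1^*, \ldots, y_h^*\}$ being a $k$-basis of $R^g_1$.
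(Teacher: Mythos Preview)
Your proposal is correct and follows essentially the same route as the paper: both arguments extend the initial forms of a minimal reduction to a basis of $R^{g}_{1}$, use Cohen--Macaulayness of $R^{g}$ (Sally's theorem, cited in the paper via \cite[Corollary~2.6]{RV-Lect}) to get the regular-sequence colon identities below $J^{*}$ and the relation $(\m^{*})^{2}=J^{*}\m^{*}$ to force the remaining colons to equal $\m^{*}$, thereby producing a Gr\"obner flag through $J^{*}$; the conclusion then follows from Corollary~\ref{EQ-Koszul} exactly as you outline, since $R^{g}/J^{*}$ has square-zero maximal ideal. The only cosmetic difference is that the paper phrases the construction as a Gr\"obner flag rather than a Koszul filtration and selects generators of $J$ forming a superficial sequence, but the content is identical.
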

\begin{proof}
Let $J$ be a minimal reduction of $\m$. Then, there exist  a maximal superficial
sequence $a_{1},\cdots,a_{d}$  such that $J=(a_{1},\cdots,a_{d})$ (see \cite{H-Sw}).
For all $i=1,\cdots,d$ set $a^*_{i}$ be the corresponding  initial form of $a_{i}$ in $R^{g}$. Also, set $J^{*}$ be
 the graded ideal generated by the initial forms of $J$. Then, by \cite[Corollary 2.6]{RV-Lect}, $R^{g}$ is a Cohen-Macaulay ring and the sequence
$ a^{*}_{1},\cdots,a^{*}_{d}$ is an $R^{g}$-regular sequence. Now, setting $n:=\dim_{k}R^{g}_{1}$,
 there exist $b_{d+1},\cdots,b_{n}$ in $R^{g}_{1}$ such that \\
\[R^{g}_{1}=\langle a^{*}_{1},\cdots,a^{*}_{d},b_{d+1},\cdots,b_{n}\rangle_{k} .\]\\
  Since $R$ has minimal multiplicity, again by \cite[Corollary 2.6]{RV-Lect}, we have
  \[Jm^i=m^{i+1} \qquad \text{for all}\ i\geq1.\] So,
   \[J^{*}=(a^{*}_{1},\cdots,a^{*}_{d}), \qquad   J^{*}(R^{g}_{1})=(R^{g}_{2})\]
   and then \[b_{i}R^{g}_{1}\subseteq (a^{*}_{1},\cdots,a^{*}_{d})(R^{g}_{1}) \qquad \text{for}\ d+1\leq i\leq n .\]
   Thus, for  $d+1\leq i\leq n$ we have
  \[((a^{*}_{1},\cdots,a^{*}_{d},b_{d+1},\cdots,b_{i-1}) :b_{i}) = (R^{g}_{1}).\] Also,
  \[(0:a^{*}_{1})=0\quad \text{and}\quad (a^{*}_{1},\cdots,a^{*}_{i-1}:a^{*}_{i})=(a^{*}_{1},\cdots,a^{*}_{i-1}),\]
  for all $i=1,\cdots d$.
   which implies that the  ordered basis $\{a^{*}_{1},\cdots,a^{*}_{d},b_{d+1},\cdots,b_{n}\}$ of $R^{g}$ is a flag basis for $R^{g}$.\\
  Now, one has \[R^{g}/J^{*} \cong (R/J)^{g}\] and
 \[ [(R/J)^{g}]_{i}=0 \qquad \text{for all}\ i\geq 2.\]
 Therefore, if $M$ is an $R$-module such that $JM=0$, then $J^{*}M^{g}=0$
and  it is clear that $M^{g}$ has  $0$-linear resolution over $R^{g}/J^{*}$.
 Hence the result follows from \ref{EQ-Koszul}.

\end{proof}

\smallskip

Fitzgerald established in \cite[Theeorem 3.6]{F} that a local ring $(R,\m) $ with $\m^{3}=0$ and $\ann(x)\m=\m^2$, for all $x\in \m\setminus \m^2$,
is Koszul. But the Koszul property of modules over this class of rings is not known.

Below, we show that
the associated graded ring of a local ring which satisfies the above condition, admits a Koszul
filtration. And we give an alternative proof for the result  obtained by Fitzgerald.
 Furthermore, we will give information on the  Koszulness and the regularity of modules over the local rings with this property.

\begin{thm}\label{Fitz}
Let $(R,\m)$ be a local ring and $M$ be an $R$-module. If $\ann(x)\m=\m^2$, for all $x\in \m\setminus \m^2$, then the following hold.
\begin{enumerate}
\item[$(i)$] $M$ is Koszul if there exists $x\in \m\setminus \m^{2}$ such that $\ann(x)M$=0;\\
In particular, $R$ is a Koszul ring.
\item[$(ii)$] if $x_{1},\cdots,x_{s}$ are in $\m\setminus \m^{2}$, then $(x_{1},\cdots,x_{s})M$ is Koszul;
\item[$(iii)$] $\reg _{R}(M)\leq 1$;
\item[$(iv)$] for every ideal $I \subseteq \m$ the quotient ring $R/I$ is Koszul;
 \end{enumerate}
\end{thm}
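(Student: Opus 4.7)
The strategy is to prove all four parts by constructing a Koszul filtration on $R^{g}$ and then invoking Corollary \ref{EQ-Koszul} and Proposition \ref{fit-lin}. First I would note that the hypothesis forces $x\m^{2}=0$ for every $x\in\m\setminus\m^{2}$ (because $\ann(x)\m=\m^{2}$ gives $\m^{2}\subseteq\ann(x)$), hence $\m^{3}=0$ and $R^{g}$ is concentrated in degrees $0,1,2$.

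The central technical step is to verify that
\[
\mathcal{F}=\{(0)\}\cup\bigl\{(V):V\text{ is a }k\text{-subspace of }R^{g}_{1}\bigr\}
\]
is a Koszul filtration of $R^{g}$. Conditions (1) and (2) of Definition \ref{Koszul-filt} are immediate. For (3), given $(V)\in\mathcal{F}$ nonzero, pick $0\neq\ell\in V$ and a complement $V'$ with $V=V'\oplus k\ell$; then $(V)/(V')$ is cyclic. Using $\m^{*3}=0$ one computes $(V'):\ell=W\oplus R^{g}_{2}$, where $W=\{r\in R^{g}_{1}:r\ell\in V'R^{g}_{1}\}$. Lifting $\ell$ to $x\in\m\setminus\m^{2}$, the space $W$ contains the image of $\ann_{R}(x)$ in $R^{g}_{1}$, and the hypothesis $\ann(x)\m=\m^{2}$ then yields $W\cdot R^{g}_{1}=R^{g}_{2}$, so $(V'):\ell=(W)\in\mathcal{F}$. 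This colon computation is the main technical point and the place where the hypothesis is used essentially.

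For (i), the initial ideal $\ann(x)^{*}$ equals $(W_{x})$ with $W_{x}=\ann(x)/\m^{2}\subseteq R^{g}_{1}$; it lies in $\mathcal{F}$, annihilates $M^{g}$, and contains $R^{g}_{2}$. Completing a basis of $W_{x}$ to one of $R^{g}_{1}$ gives a flag of $\mathcal{F}$ through $\ann(x)^{*}$, so Corollary \ref{EQ-Koszul} reduces to showing $M^{g}$ is Koszul over $R^{g}/\ann(x)^{*}$, a ring with $\m^{2}=0$; but every graded module generated in degree $0$ over such a ring has $0$-linear resolution (each syzygy is annihilated by the maximal ideal and hence splits as a sum of shifts of $k$). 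Taking $M=k$ yields the Koszulness of $R$. For (iv), since no unit lies in $(I:x)$ (else $x\in I$), one has $(I:x)\subseteq\m$ and hence $(I:x)\m\subseteq\m^{2}$; combined with $\ann(x)\m=\m^{2}$ this shows $R/I$ satisfies the same hypothesis, so (i) applied to $R/I$ with $M=k$ gives $R/I$ Koszul.

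For (ii), observe $\m^{2}N=0$ for $N=(x_{1},\dots,x_{s})M$ (since $\m^{2}x_{i}\subseteq\m^{3}=0$), so $N^{g}$ is concentrated in degrees $0,1$ and $R^{g}_{2}$ acts trivially. The plan is to exhibit an ideal $J\in\mathcal{F}$ with $JN^{g}=0$ and $R^{g}/J$ having $\m^{2}=0$, then repeat the argument of (i). The main obstacle is to find such a $J$: one seeks a subspace $U\subseteq\m$ with $U\cdot x_{i}M=0$ for all $i$ and $U\m=\m^{2}$, and the naive choice $U=\bigcap_{i}\ann(x_{i})\cap\m$ can fail the second condition, so a finer construction (possibly by induction on $s$, using the base case $N=x_{1}M$ handled directly by (i) since $\ann(x_{1})\cdot x_{1}M=0$) is required. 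Once (ii) is in hand, (iii) follows by taking $s=\mu(\m)$: then $\m M$ is Koszul, so $(\m M)^{g}\cong(\m^{*}M^{g})(1)$ gives $\reg_{R^{g}}(\m^{*}M^{g})=1$, and the short exact sequence $0\to\m^{*}M^{g}\to M^{g}\to M^{g}/\m^{*}M^{g}\to 0$ together with $\reg_{R^{g}}(M^{g}/\m^{*}M^{g})=0$ (as $R$ is Koszul) yields $\reg_{R}(M)\leq 1$.
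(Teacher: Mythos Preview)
Your construction of the Koszul filtration on $R^{g}$ and your treatment of parts (i) and (iv) coincide with the paper's argument. Your deduction of (iii) from (ii) via $(\m M)^{g}\cong(\m^{*}M^{g})(1)$ is correct and is essentially what the paper does (it invokes the intermediate statement, proved inside (ii), that $\m^{*}M^{g}$ has $1$-linear resolution).

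The gap is in (ii). You correctly notice that the ``single annihilating $J$'' approach fails, and you gesture toward an induction on $s$, but the induction you sketch is at the level of $R$-modules, and that does not go through: a short exact sequence
\[
0\longrightarrow (x_{1},\dots,x_{s-1})M\longrightarrow (x_{1},\dots,x_{s})M\longrightarrow x_{s}\bigl[M/(x_{1},\dots,x_{s-1})M\bigr]\longrightarrow 0
\]
of $R$-modules does not in general induce a short exact sequence after applying $(\,{-}\,)^{g}$, so Koszulness of the two ends does not force Koszulness of the middle. The paper avoids this by carrying out the induction entirely on the $R^{g}$-side: it shows first, by induction on $s$, that for every graded $R^{g}$-module $N$ generated in degree $0$ the submodule $(x_{1}^{*},\dots,x_{s}^{*})N$ has $1$-linear resolution (here the short exact sequence is genuinely graded and the long exact Tor sequence immediately gives linearity). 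The second ingredient you are missing is the comparison between $[\mathbf{x}M]^{g}$ and $\mathbf{x}^{*}M^{g}$: these are \emph{not} equal for general $\mathbf{x}=(x_{1},\dots,x_{s})$, and the paper bridges them via the exact sequence
\[
0\longrightarrow L(-1)\longrightarrow [\mathbf{x}M]^{g}(-1)\longrightarrow \mathbf{x}^{*}M^{g}\longrightarrow 0,
\qquad L=\dfrac{\mathbf{x}M\cap\m^{2}M}{\mathbf{x}\m M},
\]
where $L$ is a $k$-vector space concentrated in degree $0$; since $R^{g}$ is Koszul (from (i)), $L$ has $0$-linear resolution, and the long exact Tor sequence then transfers the $1$-linearity of $\mathbf{x}^{*}M^{g}$ to $0$-linearity of $[\mathbf{x}M]^{g}$. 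Without these two steps your argument for (ii) (and hence the stated derivation of (iii)) is incomplete.
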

\begin{proof}
Let $\m$ be minimally generated by $x_{1},\cdots,x_{r}.$ Then the assumption implies that
$\m^{2}\subseteq \bigcap_{i=1} \ann(x_{i})$. Hence $\m^{3}\subseteq \m \bigcap_{i=1} \ann(x_{i})$ $=0$.\\
 \indent   If $x\in \m\setminus \m^2$ and  $r\in \ann(x)$ with $r\in \m\setminus \m^{2}$, then  \[(r+\m^{2})(x+\m^2)=rx+\m^{3} = 0.\]
 Now, clearly \[\ann(x)^* = \ann(x^{*})\qquad \forall x\in \m\setminus \m^2, \]
 where $\ann(x)^{*}$ is the ideal of $R^{g}$ generated by the initial forms of elements
 of $\ann(x)$ and $x^{*}$ is the initial form of $x$ in $R^{g}$.
Let $r$ be an arbitrary element in $ \m^2$. Then, by the assumption, $r\in \ann(x)\m$ and
we may write \[r=\sum r_{i}z_{i} \ \ \text{where}\ r_{i}\in \m\setminus \m^2 ,z_{i}\in \ann(x)\setminus \m^2.\]
This implies
\begin{equation}\label{2}
R^{g}_{2}\subseteq ( \ann(x)^{*}R^{g}_{1}) =( \ann(x^{*})R^{g}_{1}).
\end{equation}
Therefore, for  each non-zero element  $l\in R^{g}_{1}$,  $R^{g}_{2}\subseteq \ann(l)$.
This shows that $\ann(l)$ is generated by linear forms. \\
  \indent Now, we claim that the family of ideals
   \[ \mathcal{F}:=\{I \subseteq R^{g}: I  \ \text{is  generated by  linear forms}\}\]\\
   is a Koszul filtration of $R^{g}$. Set $\m^{*}$ be the maximal homogeneous ideal of $R^{g}$.
   Clearly $(0)$ and $\m^{*}$ are in $\mathcal{F}$.
  Let $I$ be a non-zero element of $\mathcal{F}$ generated by the linear forms
  $l_{1},\cdots,l_{i-1},l_{i}$ with $l_{i}\neq0$. Set $J=(l_{1},\cdots,l_{i-1}),$
  obviously $I/J$ is cyclic. We have
  \[R^{g}_{2}\subseteq \ann(l_{i})\subseteq ((l_{1},\cdots,l_{i-1}):l_{i})\subseteq(J:I).\]
   Hence, $(J:I)\in\mathcal{F}$.
 \vskip 2mm
   In order to  prove $(i)$, consider $x\in \m\setminus \m^{2}$. By (\ref{2}), \
   $ \ann_{R}(x)^{*}$ is generated by linear forms and so
\[\ann(x)^*\in \mathcal{F}.\]

Let $\ann(x)^{*}=(l_{1},\cdots,l_{r},)$ for suitable independent linear forms
 $l_{i}\in R^{g}_{1}$. We can extend $l_{1},\cdots,l_{r}$ to a basis
 $\{l_{1},\cdots,l_{r},l_{r+1},\cdots,l_{n}\}$ of $R^{g}_{1}$, where $n=\dim_{k}(R^{g}_{1})$.
  Let $1\leq i\leq n-r$. Set \[I_{r+i}= (l_{1},\cdots,l_{r+i}) .\] Then we have\\
   \[I_{r}\subseteq I_{r+1}\subseteq \cdots \subseteq I_{n}= \m^{*}\]
 and one can see that \[(I_{j-1}:I_{j})= I_{n}\qquad  for \ all \ r+1\leq j\leq n.\]
 Hence $I_{r}\subseteq I_{r+1}\subseteq \cdots \subseteq I_{n}= \m^{*}$ is a flag in $\mathcal{F}$ starting from $I_{r}$.
 Now, if  $N$ is a graded $R^{g}/I_{i}$-module
for some $r\leq i\leq n$, then,  by Proposition \ref{main}, we have \\

\begin{equation}\label{3}
\p^{R^{g}}_{N}(s,t) = \p^{R^{g}/I_{i}}_{N}(s,t) \p^{R^{g}}_{R^{g}/I_{i}}(s,t).
 \end{equation}\\
 Let $N$ be a graded $R^{g}/I_{r}$-module
  generated by elements of degree $d$. Then,  since $(R^{g}/I_{r})_{i}=0$ for $i\geq 2 $,
  $N(d)$ has $0$-linear resolution over $R^{g}/I_{r}$.
  Also, since $I_{r}\in\mathcal{F}$, the $R^{g}$-module  $R^{g}/I_{r}$
  has  $0$-linear resolution, by Proposition \ref{fit-lin}. Now, using \ref{3} and Remark \ref{nota}(6) we conclude
  that $N$ has $d$-linear resolution over $R^{g}$.\\
 \indent Let $x\in \m\setminus \m^2$ such that $\ann(x)M=0$.
 This implies that $\ann(x)^{*}M^{g}=0.$ Hence $M^{g}$ is an $R^{g}/\ann(x)^{*}$-module
  and by the above argument  $M^{g}$ has  $0$-linear resolution  over $R^{g}$.
   Therefore, by Theorem \ref{Koszul-eq}, $M$ is Koszul . \\
 \indent $(ii)$ Let  $x_{1},\cdots,x_{s}\in \m\setminus \m^{2}$. For all $i=1,\cdots,s$ set $x_{i}^{*}$
be the initial form of $x_{i}$ in $R^{g}$. First we show, by induction on $s$, that for every  graded $R^{g}$-module $N$
 generated by elements of degree $0$, the module  $(x^{*}_{1},\cdots ,x^{*}_{s})N$ has
 $1$-linear resolution over $R^{g}$.\\
  \indent Let $s=1$. Since
 $\ann(x^{*}_{1})(x^{*}_{1}N)=0$, by the proof of $(i)$, we obtain that
 $x^{*}_{1}N$ has  $1$-linear resolution as an $R^{g}$-module.
 Now, let $s> 1$ and suppose that the result has been proved for smaller values of $s$.
  Since
 \[(x^{*}_{1},\cdots ,x^{*}_{s})N/(x^{*}_{1},\cdots ,x^{*}_{s-1})N=x^{*}_{s} [N/(x^{*}_{1},\cdots ,x^{*}_{s-1})N],\]
 by the inductive hypothesis, the $R^{g}$-module  $(x^{*}_{1},\cdots ,x^{*}_{s})N/(x^{*}_{1},\cdots ,x^{*}_{s-1})N$
has  $1$-linear resolution. Now, using the exact sequence\\
 \[0\longrightarrow(x^{*}_{1},\cdots ,x^{*}_{s-1})N\longrightarrow (x^{*}_{1},\cdots ,x^{*}_{s})N\longrightarrow (x^{*}_{1},\cdots ,x^{*}_{s})N/(x^{*}_{1},\cdots ,x^{*}_{s-1})N\longrightarrow 0\]\\
 in conjunction with inductive hypothesis, one can see that  $(x^{*}_{1},\cdots ,x^{*}_{s})N$ has $1$-linear resolution.\\
 Now, set $\mathbf{x}:=(x_{1},\cdots,x_{s})$ and $\mathbf{x}^{*}:=(x^{*}_{1},\cdots,x^{*}_{s})$. Then there is an exact sequence\\
 \begin{equation}\label{4}
0\longrightarrow L(-1) \longrightarrow [\mathbf{x}M]^{g}(-1) \xrightarrow{{\rho}}\mathbf{x}^{*}M^{g}\longrightarrow 0
\end{equation}
 where $L=\mathbf{x}M\bigcap \m^{2}M/\mathbf{x}\m M$ and $\rho$ is the natural surjective homomorphism.
This  yields the long exact  sequence\\
 \begin{equation}\label{5}
 \cdots \rightarrow \Tor_{i}^{R^{g}}(L,k)_{j-1}\rightarrow \Tor_{i}^{R^{g}}([\mathbf{x}M]^{g},k)_{j-1} \rightarrow
 \Tor_{i}^{R^{g}}(\mathbf{x}^{*}M^{g},k)_{j}\rightarrow  \cdots.
 \end{equation}

  Since $R^{g}$ is Koszul, $\Tor_{i}^{R^{g}}(L,k)_{j-1}=0$ for $i> 0$ and  $j\neq i+1.$
 As we already showed, $\mathbf{x}^{*}M^{g}$ has $1$-linear resolution.
 Now, using (\ref{5}), $\mathbf{x}M$ is Koszul.\\
  $(iii)$  We have the exact sequence\\

 \[0\longrightarrow \m^{*}M^{g}\longrightarrow M^{g}\longrightarrow M^{g}/\m^{*}M^{g}\longrightarrow 0\]\\
 of $R^{g}$-modules.
 By $(ii)$, $\reg_{R^{g}}(\m^{*}M^{g})=1$. So from the above exact sequence we get\\
 \[\reg_{R^{g}}(M^{g})\leq max \{\reg_{R^{g}}(\m^{*}M^{g}),\reg_{R^{g}}(M^{g}/\m^{*}M^{g})\}=1.\]\\

 $(iv)$ Let $I\subseteq \m$ be an ideal of $R$. Set $\bar{R}=R/I$ and $\bar{\m}=\m/I$. Then one can see that $\ann_{\bar{R}}(y)\bar{\m}={\bar{\m}}^{2}$ 
 for every $y\in\bar{\m}\setminus {\bar{\m}}^{2}$. Thus $\bar{R}$ is a Koszul ring.

\end{proof}

\begin{rem}
Let $(R,\m)$ be a local ring  with $\m^{3}=0$ and $\mu(\m^{2})=1$. Then, by \cite[Proposition 4.1]{F}, the condition
$\ann(x)\m=\m^2$, for all $x\in \m\setminus \m^2$, is equivalent to  Koszulness of $R$.
Hence, by \cite[Theorems 4.1 and 1.1]{AIS}, every $R$-module has a Koszul syzygy module.\\
\indent Without considering the restriction
$\mu(\m^{2})=1$, Theorem \ref{Fitz} gives us some class of modules which are Koszul.
\end{rem}
Now, it is natural to ask:

\textbf{Question}: Let $(R,\m, k)$ be a local ring such that $\ann(x)\m=\m^2$ for each $x\in \m\setminus \m^2$.
Does  each $R$-module $M$ has a Koszul syzygy module?

 \vskip 1 cm

\textbf{Acknowledgments}\\

This work was done while the author visiting University of Genoa, Italy. He is thankful to the university for the
hospitality. He would like to give special thanks to Professor Maria Evelina Rossi for her guidance and valuable comments.
The author also wish to thank A. Conca, M. Jahangiri and M. Mandal  for many stimulating discussions.\\
\indent The author also would like to thank the referee for his/her valuable comments.

\vskip 1 cm

\end{document}